\newtheorem{thm}{Theorem}
\newtheorem{defn}{Definition}
\newtheorem{lem}{Lemma}
\newtheorem{pro}{Proposition}
\numberwithin{equation}{section}
\begin{document}
\title[Subgroups and normal subgroups]
{On the subgroups and normal subgroups of the group representation
of the Cayley tree.}

\author{F. H. Haydarov}
 \address{F.H.Haydarov\\ National University of Uzbekistan,
Tashkent, Uzbekistan.}
 \email {haydarov\_ imc@mail.ru.}

\begin{abstract}

In this paper, we give a characterization of the normal subgroups
of index $2^{s}(2n+1),\ s\in\{1,2\},\ n\in \mathbb{N}$ and of the
subgroups of index three of the group representation of the Cayley
tree.

\end{abstract}

\maketitle

{\bf{Key words.}} $G_{k}$- group, subgroup, normal subgroup,
homomorphism, epimorphism.\\

AMS Subject Classification: 20B07, 20E06.\\

\section{Introduction}

   In the theory of groups there are very important unsolved problems,
   most of which arise in solving of problems of
  natural sciences as physics, biology etc.
  In particular, if configuration of a physical system is
  located on a lattice (in our case on the graph of a group)
  then the configuration can be considered as a function defined on
  the lattice. Usually, more important configuration
  (functions) are the periodic ones. It is well-known that if
  the lattice has a group representation then periodicity
  of a function can be defined by a given subgroup of the
  representation. More precisely, if a subgroup, say $H$, is given,
  then one can define a $H$- periodic function as a function,
  which has a constant value (depending only on the coset) on
  each (right or left) coset of $H$. So the periodicity is
  related to a special partition of the group
  (that presents the lattice on which our physical system is
  located). There are many works devoted to several kinds of
  partitions of groups (lattices)
  (see e.g. \cite{1},\cite{3},\cite{5},\cite{7}).\\

   One of the central problems in the theory of Gibbs measures is to
study periodic Gibbs measures corresponding to a given Hamiltonian
of model. For any subgroup $H$ of the group $G_{k}$ we define
$H$-periodic Gibbs measures. To find new periodic and weakly
periodic Gibbs measures one usually needs to find new subgroups of
the group representation of the Cayley tree. In Chapter 1 of
\cite{5} it is given a one to one correspondence between the set
of vertices $V$ of the Cayley tree $\Gamma^{k}$ and the group
$G_{k}$ and it is given a full description of subgroups of index
two and there are also constructed several normal subgroups of the
group $G_{k}$. But there wasn't a full description of normal
subgroups of finite index (without index two) and to the best of
our knowledge until now there wasn't any description of a not
normal subgroup of finite index of the group representation of the
Cayley tree. In \cite{4} it is given a full description of normal
subgroups of indices 4 and 6 for the group representation of the
Cayley tree. In this paper we continue this investigation and
construct all normal subgroups of index $2^{s}(2n+1),\
s\in\{1,2\},\ n\in \mathbb{N}$ and all subgroups of
index three for the group representation of the Cayley tree. \\

{\it Cayley tree and its group representation.}   \,\  A Cayley
tree (Bethe lattice) $\Gamma^k$ of order $k\geq 1$ is an infinite
homogeneous tree, i.e., a graph without cycles, such that exactly
$k+1$ edges originate from each vertex. Let $\Gamma^k=(V,L)$ where
$V$ is the set of vertices and $L$ that of edges (arcs).
  Let $G_{k}$ be a free product of $k+1$ cyclic groups of the second order with
generators $a_{1},a_{2},...a_{k+1},$ respectively.
 It is known that there exists a one to one correspondence between
the set of vertices $V$ of the Cayley tree $\Gamma^{k}$ and the
group $G_{k}$. To give this correspondence we fix an arbitrary
element $x_{0}\in V$ and let it correspond to the unit element $e$
of the group $G_{k}.$ Using $a_{1},...,a_{k+1}$ we label the
nearest-neighbors of element $e$, moving in positive direction.
Now we'll label the nearest-neighbors of each $a_{i}, i=1,...,k+1$
by $a_{i}a_{j}, j=1,...,k+1$. Since all $a_{i}$ have the common
neighbor $e$ we have $a_{i}a_{i}=a_{i}^{2}=e.$ Other neighbors are
labeled starting from $a_{i}a_{i}$ in positive direction. We label
the set of all the nearest-neighbors of each $a_{i}a_{j}$ by words
$a_{i}a_{j}a_{q}, q=1,...,k+1$ starting from
$a_{i}a_{j}a_{j}=a_{i}$ by the positive direction. Iterating this
argument one gets a one-to-one correspondence between the set of
vertices $V$ of the Cayley tree
$\Gamma^{k}$ and the group $G_{k}.$\\

 Any(minimal represented) element $x\in G_{k}$ has the following form:
\,\ $x=a_{i_{1}}a_{i_{2}}...a_{i_{n}},$ where $1\leq i_{m}\leq
k+1, m=1,...,n.$  The number $n$ is called the length of the word
$x$ and is denoted by $l(x).$  The number of letters $a_{i},
i=1,...,k+1,$ that enter the non-contractible representation of
the word $x$ is denoted by $w_{x}(a_{i}).$

 The following result is well-known in group theory. If $\varphi$
 is a homomorphism of a group $G$ with the kernel $H$ then $H$ is a
normal subgroup of the group $G$ and $\varphi(G)\simeq G/H,$
(where $G/H$ is the quotient group) so the index $|G:H|$ coincides
with the order $|\varphi(G)|$ of the group $\varphi(G).$

Usually we define natural homomorphism $g$ from $G$ onto the
quotient group $G/H$ by the formula $g(a)=aH$ for all $a\in G.$
Then $Ker\varphi=H.$

\begin{defn}\label{d0}\ Let $M_{1}, M_{2},..., M_{m}$ be some sets and
$M_{i}\neq M_{j},$ for $i\neq j.$ We call the intersection
$\cap_{i=1}^{m}M_{i}$ contractible if there exists $i_{0} (1\leq
i_{0}\leq m)$ such that
 $$\cap_{i=1}^{m}M_{i}=\left(\cap_{i=1}^{i_{0}-1}M_{i}\right)\cap\left(\cap_{i=i_{0}+1}^{m}M_{i}\right).$$
\end{defn}

  Let $N_{k}=\{1,...,k+1\}.$ The following Proposition describes
  several normal subgroups of $G_{k}.$

Put \begin{equation} H_{A}=\left\{x\in G_{k}\,\ | \,\ \sum_{i\in
A}\omega_{x}(a_{i}) \ {\it is \ even} \right\},\ \ A\subset N_{k}.
\end{equation}

\begin{pro}\label{p2} \cite{5} For any $\emptyset \neq A \subseteq
  N_{k},$ the set $H_{A}\subset G_{k}$ satisfies the
  following properties:
 (a) $H_{A}$ is a normal subgroup and $|G_{k}:H_{A}|=2;$\\
 (b) $H_{A}\neq H_{B}$,\ for all $A\neq B \subseteq N_{k};$\\
 (c) Let $A_{1}, A_{2},...,A_{m}
 \subseteq N_{k}.$ If $\cap_{i=1}^{m}H_{A_{i}}$ is non-contractible, then it is
 a  normal subgroup of index $2^{m}.$\end{pro}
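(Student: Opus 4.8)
The plan is to realise each $H_A$ as the kernel of an explicit homomorphism onto a power of $\mathbb{Z}_2$ and then extract the index from linear algebra over the field $\mathbb{Z}_2=\mathbb{F}_2$. For (a), since $G_k$ is the free product of the order-two groups $\langle a_i\rangle$, the universal property of the free product lets me define a homomorphism $\varphi_A\colon G_k\to\mathbb{Z}_2$ by $\varphi_A(a_i)=1$ for $i\in A$ and $\varphi_A(a_i)=0$ otherwise; the defining relations $a_i^2=e$ are respected because $2\cdot 1=0$ in $\mathbb{Z}_2$. Evaluating on a reduced word $x=a_{i_1}\cdots a_{i_n}$ gives $\varphi_A(x)=\sum_{i\in A}\omega_x(a_i)\ (\mathrm{mod}\ 2)$, so $H_A=\ker\varphi_A$ is normal; as $A\neq\emptyset$ some $\varphi_A(a_i)=1$, whence $\varphi_A$ is onto and $|G_k:H_A|=|\mathbb{Z}_2|=2$. (The well-definedness of $\omega_x(a_i)$ modulo $2$ can also be seen directly from the fact that cancellations $a_ia_i=e$ remove letters two at a time.)

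For (b), if $A\neq B$ I pick an index $i$ in the symmetric difference, say $i\in A\setminus B$. Since $\omega_{a_i}(a_j)=\delta_{ij}$, I get $\varphi_A(a_i)=1$ and $\varphi_B(a_i)=0$, so $a_i\in H_B\setminus H_A$ and hence $H_A\neq H_B$. The substance is (c). Assembling the individual maps into $\Phi=(\varphi_{A_1},\dots,\varphi_{A_m})\colon G_k\to\mathbb{Z}_2^{\,m}$ produces a homomorphism with $\ker\Phi=\bigcap_{i=1}^m\ker\varphi_{A_i}=\bigcap_{i=1}^m H_{A_i}=:H$; thus $H$ is automatically a normal subgroup, and $|G_k:H|=|\Img\Phi|=2^{r}$, where $r=\dim_{\mathbb{Z}_2}\Img\Phi$. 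Now $\Phi(a_j)=((\chi_{A_1})_j,\dots,(\chi_{A_m})_j)$ is the $j$-th column of the matrix whose rows are the indicator vectors $\chi_{A_1},\dots,\chi_{A_m}\in\mathbb{Z}_2^{\,k+1}$, so $r$ equals this matrix's $\mathbb{Z}_2$-rank (its column span $\Img\Phi$ has the same dimension as its row span). Hence $|G_k:H|=2^m$ precisely when the indicator vectors $\chi_{A_i}$ are linearly independent over $\mathbb{Z}_2$, and it remains to identify this with non-contractibility.

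The key step, and the part I expect to be the main obstacle, is translating Definition \ref{d0} into linear (in)dependence. Because each $\varphi_{A_i}$ takes values in an abelian group, it factors through the abelianization $\pi\colon G_k\surj G_k^{\mathrm{ab}}\cong\mathbb{Z}_2^{\,k+1}$ as the linear functional $f_i(v)=\sum_{j\in A_i}v_j$, and each $H_{A_i}=\pi^{-1}(\ker f_i)$ contains the commutator subgroup. Since $\pi$ is surjective, all inclusions among the subgroups $\bigcap_{i\in S}H_{A_i}$ mirror the corresponding inclusions among the subspaces $\bigcap_{i\in S}\ker f_i$. Contractibility at $i_0$ asserts $\bigcap_{i\neq i_0}H_{A_i}\subseteq H_{A_{i_0}}$, i.e. that $f_{i_0}$ vanishes on $\bigcap_{i\neq i_0}\ker f_i$; by the dual characterization of intersections of hyperplanes this happens iff $f_{i_0}\in\spa_{\mathbb{Z}_2}\{f_i:i\neq i_0\}$. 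Therefore the intersection is contractible iff some $f_{i_0}$ (equivalently $\chi_{A_{i_0}}$) lies in the span of the remaining ones, which over a field is exactly linear dependence of the family $\{\chi_{A_i}\}$. Consequently non-contractibility is precisely linear independence, forcing $r=m$ and $|G_k:H|=2^m$, which completes (c).
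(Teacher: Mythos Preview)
Your argument is correct. The paper itself does not prove this proposition---it is quoted from \cite{5} without proof---so there is no in-paper argument to compare against. Your approach, bundling the parity homomorphisms into a single map $\Phi\colon G_k\to\mathbb{Z}_2^{\,m}$ and identifying non-contractibility with $\mathbb{F}_2$-linear independence of the indicator vectors $\chi_{A_i}$, is a clean and self-contained route to the index $2^m$; the duality step (a linear functional vanishes on $\bigcap_i\ker f_i$ iff it lies in $\spa\{f_i\}$) is standard and correctly applied. One minor remark: Definition~\ref{d0} is only stated for pairwise distinct sets, so ``non-contractible'' implicitly presumes the $H_{A_i}$ are distinct; your linear-algebra translation absorbs this automatically, since a repeated nonzero vector already forces dependence.
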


\begin{thm}\label{th0} \cite{5}

\item 1. The group $G_{k}$ does not have normal subgroups of odd
index $(\neq 1)$.

\item 2. The group $G_{k}$ has normal subgroups of arbitrary even index.
\end{thm}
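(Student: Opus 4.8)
The plan is to treat the two assertions by independent elementary arguments: a divisibility/order argument in the quotient for the non‑existence in the odd case, and an explicit construction of surjective homomorphisms onto dihedral groups for the existence in the even case. Throughout I would exploit the single structural feature that makes $G_k$ so rigid, namely that it is a free product of cyclic groups of order two, so each generator $a_i$ is an involution and a homomorphism out of $G_k$ is completely free to send each $a_i$ to any element of order dividing two.

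For part 1, I would argue by contradiction. Suppose $H\trianglelefteq G_k$ has finite odd index $m=|G_k:H|>1$, and pass to the finite quotient $Q=G_k/H$ of odd order $m$. Since $a_i^2=e$ in $G_k$, each image $a_iH$ satisfies $(a_iH)^2=H$, so $a_iH$ has order dividing two in $Q$. But $|Q|$ is odd, so by Lagrange's theorem $Q$ contains no element of order two; hence $a_iH=H$, i.e. $a_i\in H$, for every $i$. As $a_1,\dots,a_{k+1}$ generate $G_k$, this forces $H=G_k$, contradicting $m>1$. Thus no proper normal subgroup of odd index exists.

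For part 2 I would first observe that intersecting the index‑two subgroups $H_A$ via Proposition \ref{p2}(c) only ever produces indices that are powers of two, so a genuinely different construction is needed to reach an arbitrary even index $2n$. The natural target is the dihedral group $D_n=\langle r,s\mid r^n=s^2=e,\ srs=r^{-1}\rangle$ of order $2n$, which is generated by involutions: setting $s_1=s$ and $s_2=rs$, both are involutions and $s_1s_2=s(rs)=(sr)s=r^{-1}ss=r^{-1}$ has order $n$, so $\langle s_1,s_2\rangle\ni r^{-1},s_1$ gives $\langle s_1,s_2\rangle=D_n$. Since $k\geq 1$, at least two generators $a_1,a_2$ are available, and by the universal property of the free product the assignment $\varphi(a_1)=s_1$, $\varphi(a_2)=s_2$, $\varphi(a_i)=e$ for $i\geq 3$ extends to a well-defined epimorphism $\varphi\colon G_k\to D_n$. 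Then $H=\ker\varphi$ is a normal subgroup with $G_k/H\cong D_n$, hence $|G_k:H|=2n$; as $n$ ranges over $\mathbb{N}$ this realizes every even index.

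The non‑existence half is essentially routine once the quotient is formed, the whole point being the parity obstruction supplied by the order‑two generators. The only real content lies in the existence half, and the potential obstacle there is exhibiting, for every prescribed even order, a finite group generated by (at most $k+1$) involutions; I expect this to be resolved cleanly by the single observation that the dihedral groups simultaneously have every even order $2n$ and are two‑generated by reflections. To be safe I would check the degenerate case $n=1$, where $D_1\cong\mathbb{Z}_2$ and the construction recovers the index‑two subgroups $H_A$ of Proposition \ref{p2}, confirming that the dihedral family covers all even indices with no gaps.
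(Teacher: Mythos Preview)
Your argument is correct in both parts. Note, however, that the paper does not supply its own proof of this theorem: it is quoted from \cite{5} as background, so there is no in-paper proof to compare against directly.

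It is still worth observing that your dihedral construction for part~2 is exactly the mechanism the paper goes on to exploit in Section~2. The class $\Im_{2n}$ appearing in Proposition~\ref{pr1} and Theorem~\ref{th2} consists of finite groups of order $2n$ generated by two involutions, and any such group is a finite quotient of the infinite dihedral group, hence is $D_n$. Your explicit epimorphism $a_1\mapsto s$, $a_2\mapsto rs$, $a_i\mapsto e$ for $i\geq 3$ is the special case $B_0=\{3,\dots,k+1\}$, $B_1=\{1\}$, $B_2=\{2\}$ of the paper's maps $\varphi_{B_0B_1B_2}$, and your kernel is the corresponding $H^{(n)}_{B_0B_1B_2}(G)$. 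So while the paper defers the proof of Theorem~\ref{th0} to \cite{5}, your construction anticipates precisely the machinery it later develops; what the paper adds beyond existence is the claim (for the indices it treats) that \emph{every} normal subgroup of that index arises this way.
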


\section{Subgroups and normal subgroups of finite index.}

\subsection{Normal subgroups of index $\mathbf{2^{s}(2n+1)},\ \mathbf{s\in\{1,2\}},\ \mathbf{n\in \mathbb{N}}$.}

\begin{defn}\label{d9} A commutative group $G$ of order $2^{n}$ is called $K_{2^{n}}-group$ if $G$ is generated by free product of $n$ elements
$c_{i}, i=\overline{1,n}$ satisfying the relations $o(c_{i})=2,\
i=\overline{1,n},$
\end{defn}

 \begin{pro}\label{p1.} Let $\varphi$ be a homomorphism of the group $G_{k}$ onto a finite commutative group
 $G.$ Then $\varphi(G_{k})$ is isomorphic to $K_{2^{i}}$ for some $i\in \mathbb{N}.$
\end{pro}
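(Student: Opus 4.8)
The plan is to exploit the single structural fact that every generator of $G_k$ is an involution, and then let commutativity propagate this to the whole image. First I would note that since $a_i^2=e$ in $G_k$ for each $i$, applying the homomorphism gives $\varphi(a_i)^2=\varphi(a_i^2)=\varphi(e)=e$, so each $\varphi(a_i)$ is either trivial or an element of order two. Because $\varphi$ is surjective and the $a_i$ generate $G_k$, the group $G=\varphi(G_k)$ is generated by the finitely many elements $\varphi(a_1),\dots,\varphi(a_{k+1})$, all of order dividing two.

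Next I would upgrade this from the generators to every element of $G$. Take an arbitrary $g\in G$ and write it as a product $g=\varphi(a_{i_1})\cdots\varphi(a_{i_m})$ of generators. Since $G$ is commutative, squaring and regrouping gives $g^2=\varphi(a_{i_1})^2\cdots\varphi(a_{i_m})^2=e$. Hence every element of $G$ has order dividing two, so $G$ is an elementary abelian $2$-group; in particular its order is a power of two.

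Finally I would identify the abstract isomorphism type. Regard the finite elementary abelian $2$-group $G$ as a vector space over the field $\mathbb{F}_2$, of some finite dimension $i$. Choosing an $\mathbb{F}_2$-basis $c_1,\dots,c_i$ exhibits $G$ as generated by $i$ commuting elements each of order two, subject to no relations beyond $o(c_j)=2$ and commutativity, with $|G|=2^i$. This is exactly the defining data of a $K_{2^i}$-group in Definition \ref{d9}, so $\varphi(G_k)\cong K_{2^i}$.

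The argument is essentially routine once the involution observation is made, so I do not anticipate a genuine obstacle; the only point requiring care is the last step, namely matching the somewhat unusual phrasing of Definition \ref{d9} (a commutative group ``generated by a free product'' of order-two elements) with the concrete model $(\mathbb{Z}/2\mathbb{Z})^i$, and confirming that the chosen basis realizes precisely this presentation.
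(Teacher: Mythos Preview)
Your argument is correct and in fact more transparent than the paper's. You go straight to the heart of the matter: commutativity lets you push the relation $\varphi(a_i)^2=e$ from the generators to every element of $G$, which immediately forces $G$ to be an elementary abelian $2$-group, and the identification with $K_{2^i}$ via an $\mathbb{F}_2$-basis is then automatic.

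The paper instead argues by contradiction on the order. It supposes $|G|=2^s m$ with $m>1$ odd, claims one can find $s$ distinct images $\varphi(a_{j_p})$ generating a subgroup of order exactly $2^s$, then locates a further $\varphi(a_{j_0})$ outside that subgroup to produce a subgroup of order $2^{s+1}$, contradicting Lagrange. This route is longer and its intermediate assertions (that the chosen generators span a subgroup of size exactly $2^s$, that a new generator doubles the size) tacitly presuppose the very elementary-abelian structure your argument establishes in one line. Your approach buys both brevity and rigor; the paper's approach, as written, requires the reader to supply precisely the observation you make explicit.
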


\begin{proof} Let $(G,\ast)$ be a commutative group of order
$n$ and $\varphi:G_{k}\rightarrow G$ be an epimorphism. We will
first show that $n\in \{2^{i}|\ i\in \mathbb{N}\}.$ Suppose $n$
does not belong to $\{2^{i}|\ i\in \mathbb{N}\}.$ Then there exist
$m>1,\ m,s \in \mathbb{N}\cup \{0\}$ such that $n=2^{s}m,$ where
$m$ is odd. Since $\varphi: G_{k}\rightarrow G$ is an epimorphism
there exist distinct elements $\varphi(a_{j_{p}}),\
p=\overline{1,s}.$ Clearly, $<\varphi(a_{j_{1}}),
 \varphi(a_{j_{2}}) ... ,\varphi(a_{j_{s}})>$ is a subgroup of $G$ and
$|<\varphi(a_{j_{1}}), \varphi(a_{j_{2}}) ...
,\varphi(a_{j_{s}})>|=2^{s}.$ Since $m>1$ there exist at least one
element $a_{j_{0}}\in G_{k}$ such that $\varphi(a_{j_{0}})\neq
\varphi(a_{j_{i}}), \ i\in \{1,2,...,s\}.$ Hence
$<\varphi(a_{j_{0}}),
 \varphi(a_{j_{1}}) ... ,\varphi(a_{j_{s}})>$ is a subgroup of $G.$
Then by Lagrange's theorem $|G|: 2^{s+1}$ which contradicts our
assumption that $m$ is odd. Thus $n\in \{2^{i}|\ i\in
\mathbb{N}\}.$ If $n=2^{q}$ then $G$ is equal to
$<\varphi(a_{j_{1}}),
 \varphi(a_{j_{2}}) ... ,\varphi(a_{j_{q}})>$  We define the following mapping
$\gamma(a_{j_{i}})=c_{i}\in K_{q}$ for all $i\in\{1,2,...,q\}.$ It
is easy to check that this mapping is a one to one correspondence
from $G$ to $K_{q}.$ This completes the proof.
\end{proof}

 Let $A_{1},A_{2},...,A_{m}\subset N_{k},\ m\in \mathbb{N}$ and $\cap_{i=1}^{m}H_{A_{i}}$
is non-contractible. Then we denote by $\Re$ the following
set\\
$$\Re=\{\cap_{i=1}^{m}H_{A_{i}}| \ A_{1},A_{2},...,A_{m}\subset
N_{k},\ m\in \mathbb{N}\}.$$

\begin{thm}\label{th1}  Let $\varphi$ be a homomorphism from $G_{k}$ to a finite commutative group.
Then there exists an element $H$ of $\Re$ such that
$Ker\varphi\simeq H$ and conversely.
\end{thm}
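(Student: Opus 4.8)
The plan is to show that the kernels of homomorphisms $G_k \to G$ (with $G$ finite commutative) are \emph{exactly} the non-contractible intersections $\cap_i H_{A_i}$, by reducing everything to homomorphisms into $\mathbb{Z}/2\mathbb{Z}$ and invoking the structure result Proposition \ref{p1.}. For the forward direction, given $\varphi : G_k \to G$ I would first replace $G$ by the image $\varphi(G_k)$, which leaves $\mathrm{Ker}\,\varphi$ unchanged and lets me assume $\varphi$ surjective. By Proposition \ref{p1.}, $\varphi(G_k) \cong K_{2^q}$ for some $q$; since a $K_{2^q}$-group is commutative and generated by elements of order two, it is elementary abelian, i.e.\ isomorphic to $(\mathbb{Z}/2\mathbb{Z})^q$. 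Composing $\varphi$ with the $q$ coordinate projections $\pi_j$ gives homomorphisms $\phi_j = \pi_j \circ \varphi : G_k \to \mathbb{Z}/2\mathbb{Z}$, and since $x \in \mathrm{Ker}\,\varphi$ iff $\phi_j(x)=0$ for every $j$, we have $\mathrm{Ker}\,\varphi = \bigcap_{j=1}^{q}\mathrm{Ker}\,\phi_j$.

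The next step is to identify each $\mathrm{Ker}\,\phi_j$ with a set $H_{A_j}$. A homomorphism $\phi : G_k \to \mathbb{Z}/2\mathbb{Z}$ is determined by the images of the generators, each of which is $0$ or $1$; setting $A=\{\,i : \phi(a_i)=1\,\}$ and using that cancellations in the free product remove letters in pairs, so that $\omega_{xy}(a_i) \equiv \omega_x(a_i)+\omega_y(a_i)\pmod 2$, one obtains $\phi(x)=\sum_{i\in A}\omega_x(a_i)\bmod 2$, hence $\mathrm{Ker}\,\phi = H_A$. Surjectivity of $\varphi$ forces each $A_j$ to be nonempty (otherwise $\phi_j\equiv 0$ and the $j$-th coordinate of $\varphi$ could never equal $1$) and forces the $A_j$ to be pairwise distinct (otherwise two coordinates of $\varphi$ would always agree); by Proposition \ref{p2}(b) the $H_{A_j}$ are then pairwise distinct. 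Thus $\mathrm{Ker}\,\varphi = \bigcap_{j=1}^{q} H_{A_j}$ is an intersection of distinct sets with $\emptyset \neq A_j \subseteq N_k$. If this intersection is contractible, Definition \ref{d0} permits deleting a redundant factor without changing the set; iterating, I reach a non-contractible intersection equal to $\mathrm{Ker}\,\varphi$, which is by definition an element $H$ of $\Re$.

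For the converse, given $H=\bigcap_{i=1}^{m} H_{A_i}\in\Re$ I would define $\varphi : G_k \to (\mathbb{Z}/2\mathbb{Z})^m$ by $\varphi(x)=\bigl(\sum_{i\in A_1}\omega_x(a_i),\dots,\sum_{i\in A_m}\omega_x(a_i)\bigr)\bmod 2$. Each coordinate is a homomorphism into $\mathbb{Z}/2\mathbb{Z}$ with kernel $H_{A_i}$ (this is precisely the content of Proposition \ref{p2}(a)), so $\varphi$ is a homomorphism into a finite commutative group with $\mathrm{Ker}\,\varphi = \bigcap_{i=1}^{m} H_{A_i} = H$, giving the desired equality (in particular $\mathrm{Ker}\,\varphi\simeq H$).

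I expect the only delicate point to be the passage from the raw intersection $\bigcap_{j=1}^{q} H_{A_j}$ to a genuine element of $\Re$: one must check that deleting redundant factors via Definition \ref{d0} preserves the kernel and terminates in a non-contractible intersection, and one should treat separately the degenerate case where $\varphi$ is trivial (kernel $G_k$, index $1$), which is either excluded or absorbed by the convention $m\geq 1$. The identification $\mathrm{Ker}\,\phi_j = H_{A_j}$ together with the non-emptiness and distinctness of the $A_j$ are the technical facts underpinning the whole argument, but they follow routinely from surjectivity and the parity behaviour of $\omega_x$.
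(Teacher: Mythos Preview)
Your proof is correct and follows essentially the same route as the paper: both invoke Proposition~\ref{p1.} to reduce to $K_{2^{q}}\cong(\mathbb{Z}/2\mathbb{Z})^{q}$ and then identify $\mathrm{Ker}\,\varphi$ with an intersection $\bigcap_{j} H_{A_{j}}$. Your decomposition via coordinate projections is simply a cleaner packaging of the paper's explicit case-by-case formula for $\phi_{A_{1}\dots A_{n}}$ on the generators, and your handling of the forward direction, the non-contractibility reduction, and the converse is in fact more carefully argued than the paper's somewhat elliptical version.
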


 \begin{proof} We give a one to one correspondence between $\left\{Ker\varphi|\
 \varphi(G_{k})\simeq G\in\{K_{2^{i}}|\ i\in \mathbb{N}\}\right\}$ and $\Re.$
Let $\varphi$ be a homomorphism from $G_{k}$ to a finite
commutative group of order $p$. Then by Proposition \ref{p1.} the
number $p$ belongs to the set $\{2^{i}|\ i\in \mathbb{N}\}$ and if
$p=2^{n}$ then $\varphi:G_{k}\rightarrow K_{2^{n}}$ is an
isomorphism. For any nonempty sets $A_{1}, A_{2},...,A_{n}\subset
N_{k}$ and $a_{i}\in G_{k},\ i\in N_{k}$ we define the following
homomorphism\\
$$\phi_{A_{1}A_{2}...A_{n}}(a_{i})=\left\{\begin{array}{ll}
c_{j}, \ \ \mbox{if} \ \ i\in A_{j}\setminus(A_{2}\cup A_{3}\cup...\cup A_{n}),\ j=\overline{1,n}\\[2.5mm]
.\ .\ .\ .\ .\ .\ .\ .\ \\[2.5mm]
c_{j_{1}}c_{j_{2}}, \ \ \mbox{if} \ \ i\in (A_{j_{1}}\cap
A_{j_{2}})\setminus(A_{1}\cup...A_{j_{1}-1}\cup
A_{j_{1}+1}\cup...\\[2.5mm]
...\cup A_{j_{2}-1}\cup A_{j_{2}+1}\cup...\cup A_{n}), \ 1\leq j_{1}<j_{2}\leq n\\[2.5mm]
.\ .\ .\ .\ .\ .\ .\ .\ \\[2.5mm]
c_{1}...c_{j-1}c_{j+1}...c_{n}, \ \ \mbox{if} \ \ i\in (A_{1}\cap...\cap A_{j-1}\cap A_{j+1}...\cap A_{n})\setminus A_{j},\ j=\overline{1,n}\\[2.5mm]
c_{1}c_{2}...c_{n}, \ \ \mbox{if} \ \ i\in (A_{1}\cap A_{2}\cap...\cap A_{n})\\[2.5mm]
e, \ \ \mbox{if} \ \ i\in N_{k}\setminus(A_{1}\cup
A_{2}\cup...\cup A_{n}). \\
\end{array}\right.$$\\
If $i\in \emptyset$ then we'll accept that there is no index $i\in
N_{k}$ satisfying the required condition. For $x\in G_{k}$ if
$\varphi_{A_{1}A_{2}... A_{n}}(x)=e$ the number of
$a_{i},\ i=\overline{1,n}$ of $a_{i}$ appearing in the word $x$ must be even.
Therefore\\
$$Ker\varphi_{A_{1}A_{2}... A_{n}}=H_{A_{1}}\cap H_{A_{2}}\cap...\cap H_{A_{n}}.$$
Thus the following equality holds (up to isomorphism)\\
$$\left\{Ker\varphi|\
 \varphi(G_{k})\simeq G\in\{K_{2^{i}}|\ i\in \mathbb{N}\}\right\}= \Re.$$
 \end{proof}

The group $G$ has finitely generators of the order two. Assume
that $r$ is the minimal number of such generators of the group $G$
and without loss of generality we can take these generators to be
$b_{1},b_{2},...b_{r}.$ Let $e_{1}$ be the identity element of the
group $G.$ We define a homomorphism from $G_{k}$ onto $G.$ Let
$\Xi_{n}=\{A_{1}, \,\ A_{2},...,A_{n}\}$ be a partition of
$N_{k}\backslash A_{0}, \,\ 0\leq|A_{0}|\leq k+1-n.$ Then we
consider the homomorphism
$u_{n}:\{a_{1},a_{2},...,a_{k+1}\}\rightarrow \{e_{1},
b_{1}...,b_{m}\}$ given by\\
 \begin{equation} u_{n}(x)=\left\{\begin{array}{ll}
e_{1}, \ \ \mbox{if} \ \ x=a_{i}, i\in A_{0}\\[2.5mm]
b_{j}, \ \mbox{if}\ \ x=a_{i}, i\in A_{j}, j=\overline{1,n}.
\\ \end{array}\right.\end{equation}\\
For $b\in G$ we denote by $R_{b}[ b_{1}, b_{2}, ..., b_{m}]$ a
representation of the word $b$ by generators $b_{1}, b_{2}, ...,
b_{r}, \ r\leq m.$  Define the homomorphism $\gamma_{n}:
G\rightarrow G$ by the formula\\
\begin{equation}\gamma_{n}(x)=\left\{\begin{array}{ll}
e_{1}, \ \ \mbox{if} \ \ x=e_{1}\\[2.5mm]
b_{i}, \ \mbox{if}\ \ x=b_{i}, i=\overline{1,r}\\[2.5mm]
R_{b_{i}}[b_{1},...,b_{r}], \ \mbox{if}\ \ x=b_{i}, \ i\notin\{1,...,r\}.\\
\end{array}\right.\end{equation}
Put
\begin{equation} H^{(p)}_{\Xi_{n}}(G)=\{x\in G_{k}|\,\
l(\gamma_{n}(u_{n}(x))):2p\},\ 2\leq n \leq k-1. \end{equation}

Let $\gamma_{n}(u_{n}(x)))=\tilde{x}.$ We introduce the following
equivalence relation on the set $G_{k}:$ $x\sim y$ if
$\tilde{x}=\tilde{y}.$ It's easy to check this relation is
reflexive, symmetric and transitive.

 Let $(G,\ast)$ be a group of order $n$, generated by two elements
  of order 2, and let $\Im_{n}$ be the set of all such groups.

\begin{pro}\label{pr1} \  For the group
$G_{k}$ the following equality holds\\
$$\{Ker \varphi|\ \varphi:G_{k}\rightarrow G\in \Im_{2n}\ is\
 an\ epimorphism\}=$$
$$=\{H^{(n)}_{B_{0}B_{1}B_{2}}(G)|\ B_{1},
B_{2} \ is\ a\ partition\ of\ the\ set\ N_{k}\setminus B_{0},\
0\leq |B_{0}|\leq k-1\}.$$
\end{pro}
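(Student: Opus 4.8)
The plan is to mirror the strategy used for Theorem~\ref{th1}: attach to every admissible partition a concrete epimorphism onto the target group, identify its kernel with the corresponding set $H^{(n)}_{B_{0}B_{1}B_{2}}(G)$ by a length computation, and then check that this assignment exhausts all kernels and is injective. First I would pin down the target groups. A group $G$ of order $2n$ generated by two elements of order two is the dihedral group $D_{n}=\langle b_{1},b_{2}\mid b_{1}^{2}=b_{2}^{2}=(b_{1}b_{2})^{n}=e_{1}\rangle$, so $\Im_{2n}$ is, up to isomorphism, a single group whose minimal number $r$ of order-two generators equals $2$. Hence $b_{1},b_{2}$ already are the minimal generators, the rewriting homomorphism $\gamma_{2}$ is the identity on $G$, and $l(\gamma_{2}(u_{2}(x)))$ is simply the reduced free-product length of the alternating word $u_{2}(x)$ in $b_{1},b_{2}$. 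Thus for a partition $N_{k}=B_{0}\sqcup B_{1}\sqcup B_{2}$ with $B_{1},B_{2}\neq\emptyset$ the map $u_{2}$, sending $a_{i}\mapsto e_{1}$ for $i\in B_{0}$ and $a_{i}\mapsto b_{j}$ for $i\in B_{j}$, extends to an epimorphism $\varphi:=\gamma_{2}\circ u_{2}\colon G_{k}\to D_{n}$, surjective precisely because both $b_{1}$ and $b_{2}$ are hit, which is exactly why one requires $B_{1},B_{2}$ nonempty, equivalently $0\le|B_{0}|\le k-1$.

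The core step is the kernel computation, which gives the inclusion $\supseteq$. For $x\in G_{k}$ write $w$ for the reduced alternating word in $b_{1},b_{2}$ representing $\varphi(x)$ and set $\ell=l(w)$. If $\ell$ is odd then $w$ is a reflection and $\varphi(x)\neq e_{1}$; if $\ell$ is even then $w=(b_{1}b_{2})^{\pm\ell/2}$ is a rotation, and $(b_{1}b_{2})^{\ell/2}=e_{1}$ in $D_{n}$ exactly when $n\mid\ell/2$, i.e.\ when $2n\mid\ell$. Consequently $\varphi(x)=e_{1}$ if and only if $2n\mid l(\gamma_{2}(u_{2}(x)))$, which is precisely the defining divisibility condition of $H^{(n)}_{B_{0}B_{1}B_{2}}(G)$. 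Therefore $\mathrm{Ker}\,\varphi=H^{(n)}_{B_{0}B_{1}B_{2}}(G)$, so every set on the right-hand side occurs as the kernel of an epimorphism onto a group in $\Im_{2n}$.

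For the reverse inclusion I would begin from an arbitrary epimorphism $\varphi\colon G_{k}\to D_{n}$, noting first that its kernel depends only on $\varphi$ up to an automorphism of $D_{n}$, so that the left-hand side coincides with the set of normal $N\triangleleft G_{k}$ having $G_{k}/N\cong D_{n}$. Since $a_{i}^{2}=e$, each image $\varphi(a_{i})$ is either $e_{1}$ or an involution of $D_{n}$; collecting the indices with $\varphi(a_{i})=e_{1}$ into $B_{0}$ and grouping the remaining indices by their images yields a partition of $N_{k}\setminus B_{0}$, and the same reduced-word argument identifies $\mathrm{Ker}\,\varphi$ with a set of the stated divisibility type.

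The main obstacle is exactly here: in $D_{n}$ the generators may be sent to more than two distinct reflections, so a priori the induced partition has more than two blocks and is not yet of the two-block shape $\{B_{1},B_{2}\}$ demanded on the right. The crux of the proof is therefore to show that such an epimorphism can be normalized—after composing with a suitable automorphism of $D_{n}$ and absorbing the rewriting of each surplus reflection $s=(b_{1}b_{2})^{t}b_{1}$ into the length count via $\gamma_{n}$—to one whose generator images lie in the two-element set $\{b_{1},b_{2}\}$, without altering the kernel. Once this reduction is secured, it remains to verify injectivity of the assignment, namely that distinct admissible triples $(B_{0},B_{1},B_{2})$ give distinct subgroups $H^{(n)}_{B_{0}B_{1}B_{2}}(G)$; this is the analogue of part~(b) of Proposition~\ref{p2} and should follow by exhibiting, for two different triples, a single generator $a_{i}$ lying in one kernel but not the other. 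I expect the normalization of the multi-reflection epimorphisms to be the delicate point, whereas the length/divisibility computation and the injectivity are routine.
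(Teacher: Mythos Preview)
Your argument for the inclusion $\supseteq$ is correct and runs parallel to the paper's, but with a different engine for the kernel computation. You exploit the explicit dihedral structure of $G\cong D_n$: a reduced alternating word in $b_1,b_2$ of odd length is a reflection, one of even length $\ell$ is the rotation $(b_1b_2)^{\pm\ell/2}$, and this is trivial in $D_n$ precisely when $2n\mid\ell$. The paper instead argues by pigeonhole: if $\tilde y=b_{i_1}\cdots b_{i_q}$ with $q\ge 2n$, then among the $q$ prefixes $b_{i_1},\,b_{i_1}b_{i_2},\ldots$ two must coincide in the group of order $2n$, contradicting that $\tilde y$ is reduced; hence $l(\tilde y)<2n$, and together with $2n\mid l(\tilde y)$ this forces $\tilde y=e_1$. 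Your route is more structural and makes the role of $D_n$ transparent; the paper's route is shorter and uses nothing about $G$ beyond $|G|=2n$.

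For the reverse inclusion $\subseteq$ you go further than the paper does. The paper's proof only attaches to each partition $(B_0,B_1,B_2)$ the epimorphism $\varphi_{B_0B_1B_2}$ and shows $\mathrm{Ker}\,\varphi_{B_0B_1B_2}=H^{(n)}_{B_0B_1B_2}(G)$; it never starts from an arbitrary epimorphism $\varphi:G_k\to G$ and reduces it to one of this shape. The ``obstacle'' you isolate---that a generator $a_i$ may be sent to a reflection of $D_n$ other than $b_1$ or $b_2$, so the naive partition has more than two blocks---is real, and the paper's proof does not address it. So your plan to normalize via $\mathrm{Aut}(D_n)$ together with the rewriting $\gamma_n$ is the right diagnosis of what remains; just be aware that the paper leaves exactly this step implicit, so you should not expect to find the missing argument there.
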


\begin{proof} For an arbitrary group $G\in \Im_{2n}$ we give a one to one
correspondence between the two given sets.
 Let $e_{1}$ be the identity element of $G$ and $B_{1}, B_{2}$ be a
 partition of $N_{k}\setminus
B_{0},$ where $B_{0}\subset N_{k},\ 0\leq |B_{0}|\leq k-1.$ Then
we define the homomorphism
$\varphi_{B_{0}B_{1}B_{2}}:G_{k}\rightarrow G$ by the formula\\
\begin{equation}
\varphi_{B_{0}B_{1}B_{2}}(a_{i})=\left\{\begin{array}{ll}
b_{1}, \ \ \mbox{if} \ \ i\in B_{1}\\[2.5mm]
b_{2}, \ \mbox{if}\ \ i\in B_{2}.
\\ \end{array}\right.\end{equation}\\
There is only one such a homomorphism (corresponding to $B_{0},
B_{1}, B_{2}$). It's clearly $x\in Ker\varphi_{B_{0}B_{1}B_{2}}$
if and only if $\tilde{x}$ is equal to $e_{1}.$ Hence it is
sufficient to show that if $y\in H^{(n)}_{B_{0}B_{1}B_{2}}(G)$
then $\tilde{y}=e_{1}.$ Suppose that there exist $y\in G_{k}$ such
that $l(\tilde{y})\geq 2n.$ Let
$\tilde{y}=b_{i_{1}}b_{i_{2}}...b_{i_{q}}, \ q\geq 2n$ and
$S=\{b_{i_{1}}, b_{i_{1}}b_{i_{2}},...,
b_{i_{1}}b_{i_{2}}...b_{i_{q}}\}.$ Since $S\subseteq G$ there
exist $x_{1}, x_{2} \in S$ such that $x_{1}=x_{2}$ which
contradicts the fact that $\tilde{y}$ is a non-contractible. Thus
we have proved that $l(\tilde{y})<2n.$ Since $y\in
H^{(n)}_{B_{0}B_{1}B_{2}}(G)$ the number $l(\tilde{y})$ must be
divisible by $2n.$ Thus for any $y\in
H^{(n)}_{B_{0}B_{1}B_{2}}(G)$ we have $\tilde{y}=e_{1}.$ Hence for
the group $G$ we get
$Ker\varphi_{B_{0}B_{1}B_{2}}=H^{(n)}_{B_{0}B_{1}B_{2}}(G).$
\end{proof}

Let us denote by $\aleph_{n}$ the following set\\
$$\{H^{(n)}_{B_{0}B_{1}B_{2}}(G)|\ B_{1}, B_{2} \ is\ a\ partition\
of\ the\ set\ N_{k}\setminus B_{0},\ 0\leq |B_{0}|\leq k-1,\
|G|=2n\}.$$
\begin{thm}\label{th2} Any normal subgroup of index $2n$ with
 $n=2^{i}(2s+1),\ i=\{0,1\},\ s\in\mathbb{N}$ has the form $H^{(n)}_{B_{0}B_{1}B_{2}}(G),\ |G|=2n$ i.e.,\\
$$\aleph_{n}=\{H|\ H\lhd G_{k},\ |G_{k}:H|=2n\}$$
\end{thm}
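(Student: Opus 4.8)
The plan is to prove the two inclusions of the displayed equality separately, isolating the substantive content as a single statement about the finite quotient $G_k/H$. For $\aleph_n\subseteq\{H\lhd G_k\mid |G_k:H|=2n\}$ I would simply invoke Proposition~\ref{pr1}: each member $H^{(n)}_{B_0B_1B_2}(G)$ of $\aleph_n$ is the kernel of the homomorphism $\varphi_{B_0B_1B_2}\colon G_k\to G$ defined there, and since $B_1,B_2\neq\emptyset$ its image contains both $b_1,b_2$ and hence is all of $G$, so $\varphi_{B_0B_1B_2}$ is an epimorphism onto a group of order $2n$. By the first isomorphism theorem its kernel is normal of index $2n$. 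This inclusion is routine.

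The reverse inclusion carries the weight. Let $H\lhd G_k$ with $|G_k:H|=2n$ and let $\psi\colon G_k\to Q:=G_k/H$ be the canonical epimorphism, so $|Q|=2n$. Each generator satisfies $\psi(a_i)^2=e$, so $Q$ is generated by involutions. If one can show $Q\in\Im_{2n}$, i.e.\ that $Q$ is generated by two of its involutions, then $\psi$ is an epimorphism onto a group of $\Im_{2n}$, so by Proposition~\ref{pr1} (applied with $G=Q$) the kernel $H=\ker\psi$ equals some $H^{(n)}_{B_0B_1B_2}(Q)\in\aleph_n$. Thus everything reduces to the following \emph{claim}: a group $Q$ of order $2n$ with $n=2^i(2s+1)$, $i\in\{0,1\}$, that is generated by involutions is already generated by two involutions (equivalently $Q\cong D_n$).

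To prove the claim I would use that the $2$-part of $|Q|$ is only $2$ or $4$. For $i=0$ ($|Q|=2m$, $m$ odd) the left regular representation sends each involution to a fixed-point-free product of $m$ transpositions, hence to an odd permutation, giving a normal subgroup $N\lhd Q$ of index $2$ and odd order $m$; every involution lies in $Q\setminus N$, and fixing one involution $t$ yields $Q=N\rtimes\langle t\rangle$. One then tries to show $N$ is abelian with $t$ acting by inversion, and finally that $N$ is cyclic, so that $Q=D_m$. For $i=1$ ($|Q|=4m$) I would seek a normal subgroup of odd order $m$ and analyse the action of a Sylow $2$-subgroup on it.

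The last step is exactly where I expect the real obstacle, and the following example suggests it cannot be surmounted under the stated hypotheses alone. Generation by involutions forces $Q$ only to be a quotient of the generalized dihedral group $\mathrm{Dih}(N)$, and $\mathrm{Dih}(N)$ is generated by two involutions precisely when $N$ is cyclic; for odd $m$ with a repeated prime factor this fails. For instance $\mathrm{Dih}(C_3\times C_3)$ has order $18=2\cdot 9$ with $9=2^0(2\cdot 4+1)$ of the admitted form, is generated by its nine involutions, yet any two of them generate a subgroup of order at most $6$, so $\mathrm{Dih}(C_3\times C_3)\notin\Im_{18}$. For $k\ge 2$ this group is a quotient of $G_k$, producing a normal subgroup of index $18$ lying outside $\aleph_9$. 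Consequently the cyclicity of $N$ — and hence the claim — seems to require an extra hypothesis (for instance that the odd part of $2n$ be squarefree, which forces the relevant abelian sections, and therefore $N$, to be cyclic), and securing that cyclicity is the crux of any correct proof.
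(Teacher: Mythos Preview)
Your outline tracks the paper's proof closely: both directions are argued the same way, and the reverse inclusion is in both cases reduced to the claim that the finite quotient $Q=G_k/H$ lies in $\Im_{2n}$, after which Proposition~\ref{pr1} finishes. The paper argues this claim by contradiction, assuming $Q$ needs at least three involution generators $c_1,c_2,c_3$; in the case $n=2s+1$ it asserts that $|\langle c_1,c_2\rangle|\ge 4$, $|\langle c_1,c_2\rangle|\mid 2n$, and $c_1^2=e$ force $|\langle c_1,c_2\rangle|=2n$, and in the case $n=2(2s+1)$ it asserts that the only possibilities are $|\langle c_1,c_2\rangle|\in\{4,\,2(2s+1),\,4(2s+1)\}$.

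Your suspicion is exactly right, and your counterexample is valid. The generalized dihedral group $\mathrm{Dih}(C_3\times C_3)$ has order $18=2\cdot 9$ with $9=2s+1$ for $s=4$, is generated by three involutions (hence is a quotient of $G_k$ for every $k\ge 2$), yet any two of its involutions generate a subgroup of order at most $6$; so it does not lie in $\Im_{18}$, and the corresponding kernel is a normal subgroup of index $18$ outside $\aleph_9$. The paper's proof breaks at precisely the step you flag: from $|\langle c_1,c_2\rangle|\ge 4$ even and dividing $2(2s+1)$ one cannot conclude $|\langle c_1,c_2\rangle|=2(2s+1)$ unless $2s+1$ has no proper divisor greater than $1$, and likewise the list $\{4,\,2(2s+1),\,4(2s+1)\}$ in the second case is complete only when $2s+1$ is prime. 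In other words, the paper's case analysis tacitly treats $2s+1$ as prime; under that extra hypothesis its argument (and yours) goes through, but as stated for arbitrary $s\in\mathbb{N}$ the theorem is false, just as you diagnosed.
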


\begin{proof}  We will first prove that $\aleph_{n}\subseteq \{H|\ H\lhd G_{k},\
|G_{k}:H|=2n\}$. Let $G$ be a group with $|G|=2n$ and $ B_{1},
B_{2}$ a partition of the set $N_{k}\setminus B_{0},\ 0\leq
|B_{0}|\leq k-1.$ For $x=a_{i_{1}}a_{i_{2}}...a_{i_{n}}\in G_{k}$
it's sufficient to show that $x^{-1}H^{(n)}_{B_{0}B_{1}B_{2}}(G)$
$x\subseteq H^{(n)}_{B_{0}B_{1}B_{2}}(G).$ We have (as in the
proof of Proposition \ref{pr1}) if $y\in
H^{(n)}_{B_{0}B_{1}B_{2}}(G)$ then $\tilde{y}=e_{1},$ where
$e_{1}$ is the identity element of $G.$ Now we take any element
$z$ from the set $x^{-1}H^{(n)}_{B_{0}B_{1}B_{2}}(G)\ x.$ Then
$z=x^{-1}h\ x$ for some $h\in H^{(n)}_{B_{0}B_{1}B_{2}}(G).$ We have\\
$$\tilde{z}=\gamma_{n}(v_{n}(z))=\gamma_{n}\left(v_{n}(x^{-1}h\
 x)\right)=\gamma_{n}\left(v_{n}(x^{-1})v_{n}(h)v_{n}(x)\right)=$$
 $$=\gamma_{n}\left(v_{n}(x^{-1})\right)\gamma_{n}\left(v_{n}(h)\right)
 \gamma_{n}\left(v_{n}(x)\right)=\left(\gamma_{n}\left(v_{n}(x)\right)\right)^{-1}\gamma_{n}\left(v_{n}(h)\right)
 \gamma_{n}\left(v_{n}(x)\right).$$\\
From $\gamma_{n}\left(v_{n}(h)\right)=e_{1}$ we get
$\tilde{z}=e_{1}$ i.e., $z\in H^{(n)}_{B_{0}B_{1}B_{2}}(G).$ Hence\\
$$H^{(n)}_{B_{0}B_{1}B_{2}}(G)\in \{H|\ H\lhd G_{k},\ |G_{k}:H|=2n\}.$$\\
Now we'll prove that $\{H|\ H\lhd G_{k},\ |G_{k}:H|=2n\}\subseteq
\aleph_{n}.$ Let $H\lhd G_{k},\ |G_{k}:H|=2n.$ We consider the
\emph{natural homomorphism} $\phi:G_{k}\rightarrow G_{k}:H$ i.e.
the homomorphism given by $\phi(x)=xH,\ x\in G_{k}.$ There exist
$e,b_{1},b_{2},...b_{2n-1}$ such that $\phi:G_{k}\rightarrow \{H,
b_{1}H, ..., b_{2n-1}H\}$ is an epimorphism. Let $(\{H, b_{1}H,
..., b_{2n-1}H\},\ast)=\wp,$ i.e., $\wp$ is the factor group. If
we show that $\wp\in \Im_{2n}$ then the theorem will be proved.
Assuming that $\wp\notin \Im_{2n}$ then there are $c_{1},
c_{2},...,c_{q}\in \wp,\ q\geq 3$ such that
$\wp=<c_{1},...,c_{q}>.$ Clearly $<c_{1},c_{2}>$ is a subgroup of
$\wp$ and $|<c_{1},c_{2}>|$ is greater than three.\\

\textbf{\texttt{Case} $\textbf{n=2s+1}$.} By Lagrange's theorem
$|<c_{1},c_{2}>|\in \left\{m|\ \frac{2n}{m}\in
\mathbb{N}\right\}.$ If $e_{2}$ is the identity element of $\wp$
then from $c_{1}^{2}=e_{2}$ we take $|<c_{1},c_{2}>|=2n.$ Hence
$<c_{1},c_{2}>=\wp$ but $c_{3}\notin <c_{1},c_{2}>.$ Thus $\wp\in
\Im_{2n}.$\\

\textbf{\texttt{Case} $\textbf{n=2(2s+1)}$.} From Lagrange's
theorem $|<c_{1},c_{2}>|\in \left\{4, 2(2s+1), 4(2s+1)\right\}.$
Let $|<c_{1},c_{2}>|=4.$ If the number four isn't equal to one of
these numbers $|<c_{1},c_{3}>|$ or $|<c_{2},c_{3}>|$ then we'll
choose that pairs. If
$|<c_{1},c_{2}>|=|<c_{1},c_{3}>|=|<c_{2},c_{3}>|=4$ then $<c_{1},
c_{2}, c_{3}>$ is a group of order eight. Again by Lagrange's
theorem $|\wp|=2n$ must be divisible by 8 which is impossible.

Let $|<c_{1},c_{2}>|=2(2s+1).$ Then\\
$$<c_{1},c_{2}>=\{e,\ c_{1}, \ c_{2},\ c_{1}c_{2},\ c_{1}c_{2}c_{1},\ .\ .\ .\ ,\
\underbrace{c_{1}c_{2}...c_{1}}_{4s}\}=A.$$
 It's easy to check that
$$c_{3}A\cup A\subseteq \wp,\ \ \ \   c_{3}A\cap A=\emptyset,\
\ \ \ |c_{3}A\cup A|=|c_{3}A|+|A|=2n=|\wp|.$$

We then deduce that $c_{3}A\cup A=\wp$ but we'll show
$c_{3}c_{1}c_{3}\in \wp$ does not belong to $c_{3}A\cup A.$ Since
$c_{1}, c_{2}, c_{3}$ are generators we have
$c_{3}c_{1}c_{3}\notin<c_{1},c_{2}>$ so $c_{3}c_{1}c_{3}=xc_{3}$
with $x\in <c_{1}, c_{2}>$. But $x=c_{3}c_{1}\notin<c_{1},c_{2}>.$

If $|<c_{1}, c_{2}>|=4(2l+1)$ then $<c_{1}, c_{2}>=\wp$ but
$c_{3}\notin <c_{1}, c_{2}>.$ Hence $\wp\in \Im_{2n}.$ This
completes the proof.
\end{proof}

\subsection{Subgroups of index three.}

In this section "we will characterize the subgroups of index three
for the group representation of the Cayley tree.

 Let $(A_{1}, \  A_{2})$ be a partition of the set $N_{k}\backslash A_{0}, \,\
0\leq|A_{0}|\leq k-1.$ Put $m_{j}$ be a minimal element of
$A_{j},\,\ j\in\{1,2\}.$ Then we consider the function
$u_{A_{1}A_{2}}:\{a_{1},a_{2},...,a_{k+1}\}\rightarrow \{e,
a_{1}...,a_{k+1}\}$ given by
$$u_{A_{1}A_{2}}(x)=\left\{\begin{array}{ll}
e, \ \ \mbox{if} \ \ x=a_{i},\ i\in N_{k}\setminus (A_{1}\cup A_{2})\\[3mm]
a_{m_{j}}, \ \mbox{if}\ \ x=a_{i},\ i\in A_{j},\ j=\overline{1,2}.
\\ \end{array}\right.$$\\
Define $\gamma: <e, a_{m_{1}}, a_{m_{2}}> \ \rightarrow \{e,
a_{m_{1}}, a_{m_{2}}\}$ by the formula\\
$$\gamma(x)=\left\{\begin{array}{lll}
e  \ \mbox{if} \ \ x=e\\[2mm]
a_{m_{1}} \ \mbox{if} \ \ x\in\{a_{m_{1}},\ a_{m_{2}}a_{m_{1}}\}\\[2.5mm]
a_{m_{2}} \ \mbox{if} \ \ x\in\{a_{m_{2}},\
a_{m_{1}}a_{m_{2}}\}\\[2.5mm]
\gamma\left(a_{m_{i}}a_{m_{3-i}}a_{m_{i}}...\gamma(a_{m_{i}}a_{m_{3-i}})\right)
 \ \mbox{if} \ \ x=a_{m_{i}}a_{m_{3-i}}a_{m_{i}}...a_{m_{3-i}},\ l(x)\geq 3,\
 i=\overline{1,2}\\[2.5mm]
\gamma\left(a_{m_{i}}a_{m_{3-i}}a_{m_{i}}...\gamma(a_{m_{3-i}}a_{m_{i}})\right)
 \ \mbox{if} \ \ x=a_{m_{i}}a_{m_{3-i}}a_{m_{i}}...a_{m_{i}},\ l(x)\geq 3,\
 i=\overline{1,2}\\
 \end{array}\right.$$\\
For a partition $(A_{1}, \  A_{2})$ of the set $N_{k}\backslash
A_{0}, \,\ 0\leq|A_{0}|\leq k-1$ we consider the following set.\\
$$\Sigma_{A_{1}A_{2}}(G_{k})=\{x\in G_{k}|\
\gamma(u_{A_{1}A_{2}}(x))=e\}$$

\begin{lem}\label{lem1} Let $(A_{1}, \  A_{2})$
be a partition of the set $N_{k}\backslash A_{0}, \,\
0\leq|A_{0}|\leq k-1.$ Then $x\in \sum_{A_{1}A_{2}}(G_{k})$ if and
only if the number $l(u_{A_{1}A_{2}}(x))$ is divisible by 3.
\end{lem}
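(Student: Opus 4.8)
The plan is to reduce the statement to a purely combinatorial fact about the map $\gamma$ on the free product $\langle a_{m_1}, a_{m_2}\rangle$ and then prove that fact by induction on word length. First I would observe that, since $G_k$ is a free product of copies of $\mathbb{Z}/2$ and $u_{A_1A_2}$ sends every generator into $\{e, a_{m_1}, a_{m_2}\}$ (each of order dividing $2$), the assignment $u_{A_1A_2}$ extends to a homomorphism $G_k \to \langle a_{m_1}, a_{m_2}\rangle$. Because $a_{m_1}^2 = a_{m_2}^2 = e$, the group $\langle a_{m_1}, a_{m_2}\rangle$ is the infinite dihedral group and every nontrivial reduced word in it is \emph{alternating}; write $w_n^{(i)}$ for the reduced alternating word of length $n$ beginning with $a_{m_i}$, so that $u_{A_1A_2}(x)$ equals $e$ (when $n=0$) or $w_n^{(i)}$ for a unique $n = l(u_{A_1A_2}(x))$ and $i \in \{1,2\}$. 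Since $\Sigma_{A_1A_2}(G_k) = \{x \mid \gamma(u_{A_1A_2}(x)) = e\}$, the lemma is equivalent to the claim
$$\gamma(w_n^{(i)}) = e \iff 3 \mid n.$$

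Next I would record the base values straight from the definition of $\gamma$: $\gamma(e) = e$ (length $0$), $\gamma(w_1^{(i)}) = a_{m_i}$ (length $1$), and $\gamma(w_2^{(i)}) = a_{m_{3-i}}$ (length $2$, using $\gamma(a_{m_1}a_{m_2}) = a_{m_2}$ and $\gamma(a_{m_2}a_{m_1}) = a_{m_1}$). The heart of the argument is to show that the recursion collapses the length by exactly three, namely $\gamma(w_n^{(i)}) = \gamma(w_{n-3}^{(i)})$ for every $n \ge 3$. For this I would split according to the parity of $n$, matching the two recursive cases in the definition of $\gamma$. When $n$ is even, $w_n^{(i)}$ ends in the block $a_{m_i}a_{m_{3-i}}$, and the first recursion replaces this terminal block by $\gamma(a_{m_i}a_{m_{3-i}}) = a_{m_{3-i}}$; since the prefix of length $n-2$ already ends in $a_{m_{3-i}}$, the appended letter cancels and one is left with $w_{n-3}^{(i)}$. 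When $n$ is odd, $w_n^{(i)}$ ends in $a_{m_{3-i}}a_{m_i}$, and the second recursion replaces this block by $\gamma(a_{m_{3-i}}a_{m_i}) = a_{m_i}$, which cancels against the terminal $a_{m_i}$ of the length-$(n-2)$ prefix and again yields $w_{n-3}^{(i)}$.

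With the recursion in hand, the claim follows by induction on $n$: the residue of $n$ modulo $3$ determines $\gamma(w_n^{(i)})$, which equals $e$, $a_{m_i}$, or $a_{m_{3-i}}$ according as $n \equiv 0, 1, 2 \pmod 3$, and in particular $\gamma(w_n^{(i)}) = e$ exactly when $3 \mid n$. Tracing this back through $n = l(u_{A_1A_2}(x))$ gives the lemma. I expect the main obstacle to be purely bookkeeping: correctly parsing the nested definition of $\gamma$ and checking, in each parity case, that the substituted value $\gamma(\text{last two letters})$ coincides with the current terminal letter of the truncated word, so that the cancellation genuinely shortens the alternating word by three rather than by one. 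Once that cancellation is verified on the length-$2$ and length-$3$ seeds, the induction is routine.
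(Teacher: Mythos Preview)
Your proposal is correct and follows essentially the same approach as the paper: both arguments hinge on the observation that the recursive definition of $\gamma$ shortens an alternating word by exactly three letters, so that $\gamma(w_n^{(i)})$ depends only on $n \bmod 3$. The paper carries out this reduction explicitly for the odd-length case (leaving the even case as similar) and then handles the converse via a separate generation argument, whereas you package both directions into a single clean induction; the underlying mechanism is identical.
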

\begin{proof} Let $x=a_{i_{1}}a_{i_{2}}...a_{i_{n}}\in G_{k}$
and $l(u_{A_{1}A_{2}}(x))$ be odd (the even case is similar).
 Then we can write $u_{A_{1}A_{2}}(x)=a_{m_{i}}a_{m_{3-i}} ...
 a_{m_{i}}a_{m_{3-i}}a_{m_{i}}.$  $\gamma(u_{A_{1}A_{2}}(x))$ is
 equal to\\
 $$\gamma(a_{m_{i}}a_{m_{3-i}} ... a_{m_{i}}a_{m_{3-i}}a_{m_{i}})=\gamma\left(a_{m_{i}}a_{m_{3-i}} ...
 a_{m_{i}}\gamma(a_{m_{3-i}}a_{m_{i}})\right)=\gamma(\underbrace{a_{m_{i}}a_{m_{3-i}} ...
 a_{m_{3-i}}a_{m_{i}}}_{l(u_{A_{1}A_{2}}(x))-3})=...$$
 We continue this process until the length of the word
 $\gamma(u_{A_{1}A_{2}}(x))$ will be less than three. From
 $\gamma(u_{A_{1}A_{2}}(x))=e$ we deduce that
 $l(u_{A_{1}A_{2}}(x))$ is divisible by 3.

 Conversely if $l(u_{A_{1}A_{2}}(x))$ is divisible by 3 then $x$ is generated by the
 elements $a_{m_{1}}a_{m_{2}}a_{m_{1}}$ and
 $a_{m_{2}}a_{m_{1}}a_{m_{2}}.$ Since
 $\gamma(a_{m_{1}}a_{m_{2}}a_{m_{1}})=\gamma(a_{m_{2}}a_{m_{1}}a_{m_{2}})=e$
 we get $x\in \sum_{A_{1}A_{2}}(G_{k}).$
\end{proof}

\begin{pro}\label{pr2} For the group $G_{k}$ the following
equality holds
$$\{K|\ K\ is\ a\ subgroup\ of\ G_{k}\ of \ index \ 3\}=\ \ \ \ \ \ \ \ \ \ \ \ \ \ \ \
\ \ \ \ \ \ \ \ \ \ \ \ \ \ \ \ \ \ \ \ $$
$$\ \ \ \ \ \ \ \ \ \ \ \ \ \ \ \ \ \ \ \ =\{\Sigma_{A_{1}A_{2}}(G_{k})|\ A_{1},\ A_{2} \ is\ a\ partition\ of\ N_{k}\setminus A_{0}\}.$$
\end{pro}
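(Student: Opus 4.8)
The plan is to prove the two inclusions separately, in each case passing through the action of $G_{k}$ on the three cosets of the subgroup in question and identifying the resulting permutation representation in $S_{3}$. The guiding observation is that, by Lemma \ref{lem1}, membership in $\Sigma_{A_{1}A_{2}}(G_{k})$ is controlled entirely by the length of the image word $u_{A_{1}A_{2}}(x)$ modulo $3$ inside the group $\langle a_{m_{1}},a_{m_{2}}\rangle$, which is a free product of two groups of order two. So everything reduces to understanding the set of words of length divisible by $3$ in $\langle a_{m_{1}},a_{m_{2}}\rangle$ and then pulling it back along the surjection $u_{A_{1}A_{2}}$.

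For the inclusion $\supseteq$ I would make this explicit by defining a homomorphism $\phi_{A_{1}A_{2}}\colon G_{k}\to S_{3}$ on generators by $a_{i}\mapsto e$ for $i\in A_{0}$, $a_{i}\mapsto\sigma$ for $i\in A_{1}$, and $a_{i}\mapsto\tau$ for $i\in A_{2}$, where $\sigma,\tau$ are two fixed distinct transpositions (so that $\sigma\tau$ is a $3$-cycle). Since each $a_{i}$ is an involution this is well defined, and since $A_{1},A_{2}\neq\emptyset$ it is onto $S_{3}$. Using the length bookkeeping already carried out in the proof of Lemma \ref{lem1}, a reduced word has length divisible by $3$ exactly when its $\phi_{A_{1}A_{2}}$-image lands in one fixed order-two subgroup $H\le S_{3}$ (a point stabiliser); hence $\Sigma_{A_{1}A_{2}}(G_{k})=\phi_{A_{1}A_{2}}^{-1}(H)$ is a subgroup of $G_{k}$ of index $[S_{3}:H]=3$.

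For the reverse inclusion I would start from an arbitrary $K\le G_{k}$ with $[G_{k}:K]=3$ and consider the action on the three left cosets $G_{k}/K$, i.e. the homomorphism $\phi\colon G_{k}\to S_{3}$ with $K=\phi^{-1}(\mathrm{Stab}(eK))$. Because each $a_{i}$ is an involution, $\phi(a_{i})$ is either $e$ or a transposition; and since the only transitive subgroups of $S_{3}$ are $S_{3}$ and $A_{3}$, while $A_{3}$ contains no involutions, the image must be all of $S_{3}$. One then sets $A_{0}=\{\,i\mid\phi(a_{i})=e\,\}$ and seeks to split the remaining indices into classes $A_{1},A_{2}$ according to the value of $\phi(a_{i})$, so that the associated homomorphism $\phi_{A_{1}A_{2}}=\gamma\circ u_{A_{1}A_{2}}$ reproduces $\phi$ after relabelling the three points, giving $\Sigma_{A_{1}A_{2}}(G_{k})=K$.

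The main obstacle is precisely this matching step, and I expect it to be the crux of the argument. By construction $\Sigma_{A_{1}A_{2}}(G_{k})$ uses only the two transpositions $\sigma,\tau$, and in it a generator $a_{i}$ lies in the subgroup if and only if $i\in A_{0}$, i.e. if and only if $a_{i}$ acts trivially on the three cosets. A general index-three subgroup, however, can contain a generator $a_{i}$ that acts as the transposition fixing the marked coset, so that $\phi$ genuinely requires all three transpositions. The delicate point is therefore to show that, after the only permitted relabelling (swapping the two non-marked points), every index-three subgroup can still be presented with its generators avoiding the stabiliser transposition of the marked coset; establishing this—or correctly delimiting when it holds—is where the real work, and any potential gap, will lie.
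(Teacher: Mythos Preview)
Your route through the coset action $G_k\to S_3$ is more transparent than the paper's, which never names $S_3$: given an index-three subgroup $K$, the paper picks $p,q$ with $K,a_pK,a_qK$ pairwise distinct, sets $A_0=\{i:a_i\in K\}$, $A_1=\{i:a_ia_p\in K\}$, $A_2=\{i:a_ia_q\in K\}$, and then merely verifies (via Lemma~\ref{lem1}) that $\Sigma_{A_1A_2}(G_k)$ is closed under $xy^{-1}$. It never checks that this subgroup has index three, nor that it coincides with $K$; the parenthetical phrase ``corresponding to $K$'' is doing all the work and is not justified.

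The obstacle you isolate in your final paragraph is genuine and, in fact, fatal to the stated equality. Take $k=2$ and the surjection $\phi:G_2\to S_3$ defined by $a_1\mapsto(12)$, $a_2\mapsto(23)$, $a_3\mapsto(13)$, and set $K=\phi^{-1}\bigl(\{e,(12)\}\bigr)$, a subgroup of index three. Here $a_1\in K$ while $a_2,a_3\notin K$, so necessarily $A_0=\{1\}$ and $\{A_1,A_2\}=\{\{2\},\{3\}\}$ (since $a_i\in\Sigma_{A_1A_2}(G_k)$ iff $i\in A_0$). For this data $u_{A_1A_2}(a_2a_1a_2)=a_{m}\cdot e\cdot a_{m}=e$, so $a_2a_1a_2\in\Sigma_{A_1A_2}(G_2)$; yet $\phi(a_2a_1a_2)=(23)(12)(23)=(13)\notin\{e,(12)\}$, so $a_2a_1a_2\notin K$. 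Hence $K$ is not of the form $\Sigma_{A_1A_2}(G_k)$ for any admissible partition. Your diagnosis is exactly right: once some generator maps to the transposition stabilising the marked coset, the $\Sigma$-construction forgets that generator's nontrivial action, and swapping the two unmarked points cannot repair this. The paper's argument simply does not confront this point.
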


\begin{proof} Let $K$ be a subgroup of the group
$G_{k}$ with $|G_{k}:K|=3$. Then there exist $p,q\in N_{k}$ such
that $|\{K,\ a_{p}K,\ a_{q}K\}|=3$. Put\\
$$A_{0}=\{i\in N_{k}|\ a_{i}\in K\},
 \ \ \ \ A_{1}=\{i\in N_{k}|\ a_{i}a_{p}\in K\},
 \ \ \ \ A_{2}=\{i\in N_{k}|\ a_{i}a_{q}\in K\}$$\\
From $|G_{k}:K|=3$ we conclude that $\{A_{1}, A_{2}\}$ is a
partition of $N_{k}\setminus A_{0}.$ Let $m_{i}$ be a minimal
element of $A_{i},\ i=\overline{1,2}$. If we show
$\sum_{A_{1}A_{2}}(G_{k})$ is a subgroup of $G_{k}$ (corresponding
to $K$) then it'll be given one to one correspondence between
given sets. For $x=a_{i_{1}}a_{i_{2}}...a_{i_{n}}\in
\sum_{A_{1}A_{2}}(G_{k}),\ y=a_{j_{1}}a_{j_{2}}...a_{j_{m}}\in
\sum_{A_{1}A_{2}}(G_{k})$ it is sufficient to show that
$xy^{-1}\in\sum_{A_{1}A_{2}}(G_{k}).$ Let\
$u_{A_{1}A_{2}}(x)=a_{m_{i}}a_{m_{3-i}}....a_{m_{s}},\
u_{A_{1}A_{2}}(y)=a_{m_{j}}a_{m_{3-j}}....a_{m_{t}}$, where
$(i,j,s,t)\in \{1,2\}^{4}$. Since $u_{A_{1}A_{2}}$ is a
homomorphism we have\\
$$l\left(u_{A_{1}A_{2}}(xy^{-1})\right)=\left\{\begin{array}{ll}
|l\left(u_{A_{1}A_{2}}(x))-l(u_{A_{1}A_{2}}(y^{-1})\right)|, \
 \ \mbox{if} \ \ s=t\\[2.5mm]
l\left(u_{A_{1}A_{2}}(x))+l(u_{A_{1}A_{2}}(y^{-1})\right),
\ \ \mbox{if} \ \ s\neq t.\\
\end{array}\right.$$\\

By Lemma \ref{lem1} we see that $l\left(u_{A_{1}A_{2}}(x)\right)$
is divisible by 3 and
$l\left(u_{A_{1}A_{2}}(y^{-1})\right)=l\left(\left(u_{A_{1}
A_{2}}(y)\right)^{-1}\right)$ is also divisible by 3, so
$l\left(u_{A_{1}A_{2}}(xy^{-1})\right)$ is a multiple of 3, which
shows that $xy^{-1}\in \sum_{A_{1}A_{2}}(G_{k})$. This completes
the proof.
\end{proof}


\begin{thebibliography}{99}

\bibitem{1} Cohen,D.E, Lyndon,R.C., (1963), {\it Free bases for normal
subgroups of free groups}, Trans.Amer.Math.Soc., 108, pp. 526-537.

\bibitem{2} D.S, Malik, J.N, Mordeson, M.K, Sen., (1997), {\it Fundamentals
of Abstract Algebra}, McGraw-Hill Com.

\bibitem{3} Ganikhodjaev, N.N, Rozikov, U.A., (1997), {\it Description of
periodic extreme Gibbs measures of some lattice model on the
Cayley tree}, Theor.Math.Phys. 111, pp. 480-486.

\bibitem{4} U.A, Rozikov, F.H, Haydarov., (2014), {\it Normal subgroups of
finite index for the group represantation of the Cayley tree},
TWMS Jour.Pure.Appl.Math. 5, pp. 234-240.

\bibitem{5} U.A, Rozikov., (2013) {\it Gibbs measures on a Cayley
tree}, World Sci. Pub, Singapore.

\bibitem{6} Kurosh, A.G., (1953) {\it Group theory}, Akademic Verlag, Berlin.

\bibitem{7} Young, J.W., (1927), {\it On the partitions of a group and the
resulting classification}, Bull.Amer.Math.Soc., 33, pp.453-461.

\end{thebibliography}
\end{document}